\theoremstyle{plain}
\theoremstyle{definition}
\newtheorem{theorem}{Theorem}[section]
\newtheorem{corollary}[theorem]{Corollary}
\newtheorem{example}[theorem]{Example}
\newtheorem{convention}[theorem]{Convention}
\newtheorem{remark}[theorem]{Remark}
\theoremstyle{remark}
\newtheorem*{definition}{Definition}
\numberwithin{equation}{section}
\title{On adjoint invariant classes of shift operators}
\author[F. Ayatollah Zadeh Shirazi, E. Hakimi, A. Hosseini, R. Rezavand]{Fatemah Ayatollah Zadeh Shirazi, Elaheh Hakimi \\ Arezoo Hosseini, Reza Rezavand}
\begin{document}
\begin{abstract}
In the following text we compute the adjoint of weighted generalized shift operators over Hilbert spaces.
We show for a conjugate invariant subset $A$ of $\mathbb C$,  the additive semigroup generated by
$A\cup\{0\}-$weighted generalized shifts over Hilbert space $\mathcal H$ is adjoint invariant if and only if
$\mathcal H$ is a finite dimensional Hilbert space or
$0$ is not a limit point of $A$.
\end{abstract}
\maketitle
\noindent {\small {\bf 2020 Mathematics Subject Classification:}  47B37, 47B02  \\
{\bf Keywords:}}  Adjoint operator, Hermetian operator, Hilbert space, Weighted composition operator, Weighted generalized shift
\section{Introduction}
\noindent As it has been mentioned in \cite[Chapter 6]{opt} ``A study of linear operators and adjoints is essential for a sophisticated
approach to many problems of linear vector spaces''.
Whenever a high school student tries  to compute adjoint matrix of $A=[a_{ij}]_{n\times n}$ as
$B=[b_{ij}]_{n\times n}$ with $b_{ij}=\overline{a_{ij}}$, she/he tries to compute adjoint operator.
Some related topics to adjoint operators are: self--adjoint, unitary (Hermetian), normal and $C-$normal operators \cite{cnormal}.
\\
One may consider various generalizations of
one--sided and two--sided shifts. We will use one of these generalizations which has been introduced for the first time in~\cite{note},  as  follows:
\begin{definition}
For arbitrary nonempty sets $X,\Gamma$ and self--map $\varphi:\Gamma\to\Gamma$ we call:
\[\sigma_\varphi:X^\Gamma\to X^\Gamma\:\:,\:\: (x_\alpha)_{\alpha\in\Gamma}\mapsto(x_{\varphi(\alpha)})_{\alpha\in\Gamma}\:,\]
a generalized shift. In addition, if $X$ has topological (resp. group, ring, linear vector space) structure, then
$\sigma_\varphi:X^\Gamma\to X^\Gamma$ should be continuous (resp. homomorphism) where $X^\Gamma$ is equipped with product topology (resp.
product structure).
\end{definition}
\noindent On the other hand for bounded vector $(w_n)_{n\geq1}\in\mathbb{C}^{\mathbb N}$ weighted shift
$\sigma:\ell^2\to\ell^2$ with $\sigma((x_n)_{n\geq1})=(w_nx_{n+1})_{n\geq1}$ is a well--known operator \cite{conway}.
Weighted generalized shifts can be considered  as a common generalization of generalized shifts and weighted shifts.
In this point of view weighted generalized shift has been introduced for the first time in \cite{kazem}. In fact weighted generalized shifts
are known as weighted composition operators with an old and wide background \cite{tho, jeng}.
\begin{definition}
For linear vector space $X$ over field $K$ (or semigroup $K$ acting on $X$), nonempty set $\Gamma$, self--map
$\varphi:\Gamma\to\Gamma$ and weight vector $w=(w_\alpha)_{\alpha\in\Gamma}\in K^\Gamma$ we call:
\[\sigma_{\varphi,w}:X^\Gamma\to X^\Gamma\:\:,\:\: (x_\alpha)_{\alpha\in\Gamma}\mapsto(w_\alpha x_{\varphi(\alpha)})_{\alpha\in\Gamma}\:,\]
a weighted generalized shift. We call weighted generalized shift map $\sigma_{\varphi,w}$ an $A-$weighted generalized shift if for each $\alpha$, $w_\alpha\in A$.
\end{definition}
\noindent In Hilbert space $H$ with inner product $<\:,\:>$, for operator $T:H\to H$ one may consider adjoint operator $T^*:H\to H$
satisfying $<T(x),y>\:=\:<x,T^*(y)>$ for each $x,y\in H$.
Let's recall that for each Hilbert space $H$ there exists nonzero
cardinal number $\tau$ such that $H$ and $\ell^2(\tau)$ are isomorphic as Hilbert spaces. Let's recall that
$\ell^2(\tau)$ is Hilbert space $\{x\in
\mathbb{C}^\tau:||x||_2<\infty\}$ equipped with norm $||\:||_2$
($||x||_2=(\mathop{\Sigma}\limits_{\alpha\in\tau}|x_\alpha|^2)^\frac12$ for $x=(x_\alpha)_{\alpha\in\tau}\in \mathbb{C}^\tau$) arised from
inner product:
\[<x,y>:=\mathop{\Sigma}\limits_{\alpha\in\tau}x_\alpha\overline{y_\alpha}\:\:,\:\: x=(x_\alpha)_{\alpha\in\tau},y=(y_\alpha)_{\alpha\in\tau}
\in\ell^2(\tau)\:.\]
For $\varphi:\tau\to\tau$ and $w\in \mathbb{C}^\tau$ one may consider weighted generalized shift
$\sigma_{\varphi,w}:\mathbb{C}^\tau\to \mathbb{C}^\tau$, then we may consider $\sigma_{\varphi,w}\restriction_{\ell^2(\tau)}$.
Consider the following remark regarding linear map $\sigma_{\varphi,w}:\mathbb{C}^\tau\to \mathbb{C}^\tau$.
\begin{remark}\label{salam10}
For nonzero cardinal number $\tau$, self--map $\varphi:\tau\to\tau$ and
$w=(w_\alpha)_{\alpha<\tau}\in\mathbb{C}^\tau$
the following statements are equivalent \cite{weighted}:
\begin{enumerate}
\item $\sigma_{\varphi,w}(\ell^2(\tau))\subseteq\ell^2(\tau)$,
\item $\sigma_{\varphi,w}(\ell^2(\tau))\subseteq\ell^2(\tau)$ and
	$\sigma_{\varphi,w}\restriction_{\ell^2(\tau)}:\ell^2(\tau)\to \ell^2(\tau)$ is continuous,
\item $\sup\{(\mathop{\Sigma}\limits_{\alpha\in\varphi^{-1}(\beta)}|w_\alpha|^2)^{\frac12}:\beta\in\varphi(\tau)\}<+\infty$.
\end{enumerate}
In the above case
$||\sigma_{\varphi,w}\restriction_{\ell^2(\tau)}||=\sup\{(\mathop{\Sigma}\limits_{\alpha\in\varphi^{-1}(\beta)}|w_\alpha|^2)^{\frac12}:\beta\in\varphi(\tau)\}$.
\\
Also the following statements are equivalent \cite{compact}:
\begin{enumerate}
\item $\sigma_\varphi(\ell^2(\tau))\subseteq\ell^2(\tau)$,
\item $\sigma_\varphi(\ell^2(\tau))\subseteq\ell^2(\tau)$ and
	$\sigma_\varphi\restriction_{\ell^2(\tau)}:\ell^2(\tau)\to\ell^2(\tau)$ is continuous,
\item there exists $N\geq1$ such that $card(\varphi^{-1}(\beta))\leq N$ for each $\beta\in\tau$.
\end{enumerate}
\end{remark}
\begin{convention}\label{salam}
In the following text consider nonzero cardinal number $\tau$, self--map $\varphi:\tau\to\tau$
and $w=(w_\alpha)_{\alpha\in\tau}\in {\mathbb C}^\tau$ with $\sigma_{\varphi,w}(\ell^2(\tau))\subseteq\ell^2(\tau)$.
\\
For
$\theta\in\tau$ suppose $\pi_\theta:\ell^2(\tau)\to\mathbb{C}$ is the projection map on the $\theta$th
coordinate and let $\mathsf{e}_\theta:=(\delta_\alpha^\theta)_{\alpha\in\tau}(\in\ell^2(\tau))$ where $\delta_\theta^\theta=1$ and
$\delta_\alpha^\theta=0$ for $\alpha\neq\theta$.
\\
One may equip $\ell^2(\tau)$ with the topology of pointwise convergence too (inherited from product topology of ${\mathbb C}^\tau$).
By $\ell^2_p(\tau)$ we mean $\ell^2(\tau)$ with pointwise convergence topology.
\end{convention}
\noindent The main aim of this text is to compute the adjoint of appropriate weighted generalized shifts over Hilbert spaces.
\section{Adjoint of a weighted generalized shift}
\noindent In this section we show there exist weighted generalized shift operators
$\sigma_{\eta_1,u_1}\restriction_{\ell^2(\tau)}$, $\sigma_{\eta_2,u_2}\restriction_{\ell^2(\tau)},\ldots$ on $\ell^2(\tau)$ such that
\[\sigma_{\varphi,w}\restriction_{\ell^2(\tau)}^*=
\mathop{\Sigma}\limits_{i\geq1}\sigma_{\eta_i,u_i}\restriction_{\ell^2_p(\tau)}\]
where the right hand series is a pointwise convergent series in $\ell^2_p(\tau)$ (see Convention~\ref{salam}),
such that for each $i\geq1$ we have
$\sigma_{\eta_i,u_i}(\ell^2(\tau))\subseteq\ell^2(\tau)$ and
\[u_i\in\mathop{\prod}\limits_{\alpha\in\tau}(\{\overline{w_\alpha}:\alpha\in\Gamma\}\cup\{0\})\:.\]
\begin{theorem}\label{taha10}
For conjugate invariant $A\subseteq{\mathbb C}$ (i.e., $\overline{\lambda}\in A$ for each $\lambda\in A$) the adjoint of an $A-$weighted generalized operator on Hilbert space $\ell^2(\tau)$ is a convergent series
of $A\cup\{0\}-$weighted generalized on $\ell^2_p(\tau)$.
\end{theorem}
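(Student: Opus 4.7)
The plan is to describe $\sigma_{\varphi,w}^*$ explicitly on the canonical unit vectors $\mathsf{e}_\gamma$, and then repackage the resulting coordinate formula as a pointwise convergent countable sum of weighted generalized shifts. First I would compute $\langle \sigma_{\varphi,w}(x),\mathsf{e}_\gamma\rangle = w_\gamma x_{\varphi(\gamma)}$ directly, and compare with $\langle x,\sigma_{\varphi,w}^*(\mathsf{e}_\gamma)\rangle$; this pins down $\sigma_{\varphi,w}^*(\mathsf{e}_\gamma)=\overline{w_\gamma}\,\mathsf{e}_{\varphi(\gamma)}$. Extending by linearity and continuity, for arbitrary $y\in\ell^2(\tau)$ this yields the coordinatewise formula
\[\sigma_{\varphi,w}^*(y)_\beta \;=\; \sum_{\gamma\in\varphi^{-1}(\beta)}\overline{w_\gamma}\,y_\gamma,\]
in which each scalar sum converges absolutely by Cauchy--Schwarz, using $(w_\gamma)_{\gamma\in\varphi^{-1}(\beta)}\in\ell^2$ (guaranteed by Remark~\ref{salam10}) and $y\in\ell^2(\tau)$.

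Next I would split this coordinate sum into individual weighted-generalized-shift contributions. For each $\beta\in\tau$ the set $\{\gamma\in\varphi^{-1}(\beta):w_\gamma\neq 0\}$ is at most countable, so it can be enumerated as $\gamma_{\beta,1},\gamma_{\beta,2},\ldots$, with the list allowed to terminate or be empty. Then for each $i\geq 1$ I define $\eta_i(\beta):=\gamma_{\beta,i}$ and $u_i(\beta):=\overline{w_{\gamma_{\beta,i}}}$ whenever $\gamma_{\beta,i}$ is defined, and otherwise $\eta_i(\beta):=\beta$, $u_i(\beta):=0$. Conjugate invariance of $A$ ensures that each $u_i$ takes values in $A\cup\{0\}$, so every $\sigma_{\eta_i,u_i}$ is an $(A\cup\{0\})$-weighted generalized shift. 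To confirm it restricts to a bounded operator on $\ell^2(\tau)$ I would verify condition~(3) of Remark~\ref{salam10}: by construction $\eta_i^{-1}(\beta)$ contains at most two elements (one potentially from $\varphi(\beta)$ carrying weight $\overline{w_\beta}$, the other from the zero-padding convention), so the required supremum is bounded by $\|w\|_\infty$, which is finite since $|w_\alpha|^2\leq\sum_{\alpha'\in\varphi^{-1}(\varphi(\alpha))}|w_{\alpha'}|^2$ is controlled by Remark~\ref{salam10}. Finally, for every fixed $\beta$ and $y$, the partial sum $\sum_{i=1}^{n}\sigma_{\eta_i,u_i}(y)_\beta$ coincides with $\sum_{i=1}^{n}\overline{w_{\gamma_{\beta,i}}}\,y_{\gamma_{\beta,i}}$ (with any padded indices contributing $0$), and by the first step converges to $\sigma_{\varphi,w}^*(y)_\beta$, giving convergence of the series in $\ell^2_p(\tau)$.

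The main conceptual obstacle is that the fibers $\varphi^{-1}(\beta)$ can be uncountable and of wildly varying cardinality as $\beta$ ranges over $\tau$, so a single countable index set $i\geq 1$ is not visibly sufficient to parametrize a decomposition valid for every $\beta$ at once. Square-summability of the weights on each fiber is what resolves this: it confines the nontrivial part of every fiber to a countable subset, letting one global enumeration absorb them uniformly, while keeping each $\eta_i^{-1}(\beta)$ finite is precisely what preserves the bounded-weighted-generalized-shift nature of each summand.
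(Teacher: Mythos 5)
Your proposal is correct and follows essentially the same route as the paper: enumerate the (countable, by square-summability) nonzero part of each fiber $\varphi^{-1}(\beta)$, package the $i$-th elements across all fibers into a weighted generalized shift $\sigma_{\eta_i,u_i}$, check boundedness via the observation that $\gamma\in\varphi^{-1}(\beta)$ forces $\beta=\varphi(\gamma)$ so each $\eta_i^{-1}(\theta)$ carries at most one nonzero weight, and verify pointwise convergence coordinatewise. The only differences are cosmetic (you pad undefined enumeration slots with $\eta_i(\beta)=\beta$ where the paper uses a fixed $\psi\in\tau$, and you compute the adjoint on the basis vectors $\mathsf{e}_\gamma$ first rather than pairing $y$ against $\sigma_{\varphi,w}(\mathsf{e}_\theta)$ directly).
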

\begin{proof}
By
$\sigma_{\varphi,w}(\ell^2(\tau))\subseteq\ell^2(\tau)$, we have
\[M:=\sup\{(\mathop{\Sigma}\limits_{\alpha\in\varphi^{-1}(\beta)}|w_\alpha|^2)^{\frac12}:\beta\in\varphi(\tau)\}<+\infty\:.\]
Since for each $\beta\in\tau$, $\mathop{\Sigma}\limits_{\alpha\in\varphi^{-1}(\beta)}|w_\alpha|^2\leq M^2<+\infty$,
 \[C_\beta:=\{\alpha\in\varphi^{-1}(\beta):w_\alpha\neq0\}\]
 is countable. \\
Choose $\psi\in\tau$. For $\beta\in\tau$ we consider $m_\beta$ and $\alpha^i_\beta$s through the following cases:
\begin{itemize}
\item[i.] $C_\beta=\varnothing$, in this case let
	\begin{itemize}
	\item[$\bullet$] $m_\beta=1$ and
 	\item[$\bullet$] $\alpha_\beta^1=\alpha_\beta^2=\cdots=\psi$,
 	\end{itemize}
\item[ii.] $C_\beta=\{\alpha_\beta^1,\ldots,\alpha_\beta^p\}$ is a finite set with $p\geq1$
elements, in this case let
	\begin{itemize}
	\item[$\bullet$] $m_\beta=p+1$ and
	\item[$\bullet$] $\alpha_\beta^i=\psi$ for $i>p$,
	\end{itemize}
\item[iii.] $C_\beta=\{\alpha_\beta^1,\alpha_\beta^2,\ldots\}$ is an infinite set with distinct
$\alpha_\beta^i$s, in this case let  $m_\beta=+\infty$.
\end{itemize}
For each $i\geq1$ consider $\eta_i:\mathop{\tau\to\tau}\limits_{\beta\mapsto\alpha_\beta^i}$ and $u_i=(u_\alpha^i)_{\alpha\in\tau}$ with:
\[u_\beta^i=\left\{\begin{array}{lc} \overline{w_{\alpha_\beta^i}} & i\leq m_\beta \:, \\ 0 & i>m_\beta\:. \end{array}
\right.\]
{\bf Claim 1.} For each $i\geq1$, $\sigma_{\eta_i,u_i}(\ell^2(\tau))\subseteq\ell^2(\tau)$.
\\
{\it Proof of Claim 1.} Consider $i\geq1$ and $\theta\in\tau$, then:
\begin{eqnarray*}
\mathop{\Sigma}\limits_{\beta\in\eta_i^{-1}(\theta)}|u^i_\beta|^2 & = &
	\mathop{\Sigma}\limits_{\theta=\eta_i(\beta)}|u^i_\beta|^2
	= \mathop{\Sigma}\limits_{\theta=\alpha_\beta^i}|u^i_\beta|^2  \\
& = &  \mathop{\Sigma}\limits_{\theta=\alpha_\beta^i,i< m_\beta}|u^i_\beta|^2
	= \mathop{\Sigma}\limits_{\theta=\alpha_\beta^i,i< m_\beta}|\overline{w_{\alpha^i_\beta}}|^2 \\
&	= &  \mathop{\Sigma}\limits_{\theta=\alpha^i_\beta, i< m_\beta}|w_{\theta}|^2 \mathop{=}\limits^{(+)}
	\left\{\begin{array}{lc}
	|w_{\theta}|^2  & \theta=\alpha_i^\beta\wedge i< m_\beta \\ 0 & otherwise
	\end{array}\right.
\end{eqnarray*}
(for (+) note that if $\beta_1,\beta_2\in\tau$ and $i_1< m_{\beta_1},i_2< m_{\beta_2}$ are such that
\linebreak $\theta=\alpha^{i_1}_{\beta_1}=\alpha^{i_2}_{\beta_2}$, then
$\theta\in C_{\beta_1}\cap C_{\beta_2}\subseteq \varphi^{-1}(\beta_1)\cap \varphi^{-1}(\beta_2)$, hence
$\beta_1=\varphi(\theta)=\beta_2$, moreover $\alpha^{i_1}_{\beta_1}=\theta=\alpha^{i_2}_{\beta_2}=\alpha^{i_2}_{\beta_1}$
and $i_1,i_2< m_{\beta_2}= m_{\beta_1}$ which leads to $i_1=i_2$)
\\
Hence $\sup\{(\mathop{\Sigma}\limits_{\beta\in\eta_i^{-1}(\theta)}|u^i_\beta|^2)^\frac12 :\theta\in\tau\}
\leq\sup\{|w_{\theta}|:\theta\in\tau\}=||w||_\infty\leq M<+\infty$ which leads to Claim 1
by Remark~\ref{salam10}.
\\
{\bf Claim 2.} $\sigma_{\varphi,w}\restriction_{\ell^2(\tau)}^*=
\mathop{\Sigma}\limits_{i\geq1}\sigma_{\eta_i,u_i}\restriction_{\ell^2_p(\tau)}$ as a pointwise convergent series in $\ell^2_p(\tau)$.
\\
{\it Proof of Claim 2.}
For each $\theta\in\tau$ and $y=(y_\alpha)_{\alpha\in\tau}
\in\ell^2(\tau)$ we have:
\begin{eqnarray*}
\pi_\theta(\sigma_{\varphi,w}\restriction^*_{\ell^2(\tau)}(y)) &= & <\sigma_{\varphi,w}\restriction^*_{\ell^2(\tau)}(y)
	,\mathsf{e}_\theta>\: =\: <y,\sigma_{\varphi,w}(\mathsf{e}_\theta)> \\
& = & <y,(w_\alpha\delta_{\varphi(\alpha)}^\theta)_{\alpha\in\tau}> \:
	= \mathop{\Sigma}\limits_{\alpha\in\tau}y_\alpha\overline{w_\alpha\delta_{\varphi(\alpha)}^\theta} \\
& = &  \mathop{\Sigma}\limits_{\alpha\in\varphi^{-1}(\theta)}y_\alpha\overline{w_\alpha}
	= \mathop{\Sigma}\limits_{\alpha=\alpha^i_\theta,i< m_\theta}y_\alpha\overline{w_\alpha} \\
& = & \mathop{\Sigma}\limits_{i< m_\theta}y_{\alpha^i_\theta}\overline{w_{\alpha^i_\theta}}
	= \mathop{\Sigma}\limits_{i< m_\theta}y_{\alpha^i_\theta}u_i^\theta
	= \mathop{\Sigma}\limits_{i< m_\theta}u_i^\theta y_{\eta_i(\theta)} \\
& = & \mathop{\Sigma}\limits_{i\geq1}u_i^\theta y_{\eta_i(\theta)}
	= \mathop{\Sigma}\limits_{i\geq1}\pi_\theta(\sigma_{\eta_i,u_i}(y)) \: (a \: convergent \: series \: in \: \mathbb{C})
\end{eqnarray*}
Claims 1 and 2 complete the proof.
\end{proof}
\begin{example}\label{salam30}
Using the same notations as in the proof of Theorem \ref{taha10} we have the following special cases:
\begin{itemize}
\item[1.] If there exists $N\geq1$ such that $m_\beta< N$ for each $\beta\in\tau$, then for each $i\geq N$, $u_i$ is zero vector and $\sigma_{\eta_i,u_i}\restriction_{\ell^2(\tau)}$ is constant zero map, thus
$\sigma_{\varphi,w}\restriction^*_{\ell^2(\tau)}=
\mathop{\Sigma}\limits_{1\leq i< N}\sigma_{\eta_i,u_i}\restriction_{\ell^2(\tau)}$ in this case.
\item[2.] If there exists $\delta>0$ such that $w_\alpha\in\{z\in\mathbb{C}:z=0\vee|z|\geq\delta\}$, then for each $\beta\in\tau$
\[(m_\beta-1)\delta^2\leq\mathop{\Sigma}\limits_{\alpha\in\varphi^{-1}(\beta)}|w_\alpha|^2\leq M^2\]
which shows $m_\beta<\left[\frac{M^2}{\delta^2}\right]+2=:N$, so $\sigma_{\varphi,w}\restriction^*_{\ell^2(\tau)}=
\mathop{\Sigma}\limits_{1\leq i< N}\sigma_{\eta_i,u_i}\restriction_{\ell^2(\tau)}$ (use (1)).
\item[3.] If $\varphi:\tau\to\tau$ is one--to--one, then for each $\beta\in \tau$ the set $\varphi^{-1}(\beta)$ has at most one element, then
$\sigma_{\varphi,w}\restriction^*_{\ell^2(\tau)}=\sigma_{\eta_1,u_1}\restriction_{\ell^2(\tau)}$.
\\
In particular,
if $\sigma_{\varphi,w}\restriction_{\ell^2(\tau)}:\ell^2(\tau)\to\ell^2(\tau)$ is bijective, $\varphi:\tau\to\tau$ is one--to--one
and $\sigma_{\varphi,w}\restriction^*_{\ell^2(\tau)}=\sigma_{\eta_1,u_1}\restriction_{\ell^2(\tau)}$ too.
\end{itemize}
\end{example}
\section{Additive semigroup generated by $A-$weighted generalized shifts}
\noindent In this section we classify all adjoint invariant subsets $A$ of $\mathbb C$ such that
additive semigroup of $A\cup\{0\}-$weighted generalized shift operators on $\ell^2(\tau)$ is adjoint invariant. Then we will
pay attention to $\{0,1\}-$weighted generalized shift operators.
\begin{theorem}\label{salam22}
For
nonempty conjugate invariant subset $A$ of $\mathbb C$  suppose
$\mathcal S$ is the additive semigroup generated by
$\{\sigma_{\phi,u}\restriction_{\ell^2(\tau)}:\phi\in\tau^\tau,u\in(A\cup\{0\})^\tau,\sigma_{\phi,u}(\ell^2(\tau))\subseteq\ell^2(\tau)\}$
(i.e., $\mathcal S$ is the additive semigroup generated by $A\cup\{0\}-$weighted generalized shift operators on $\ell^2(\tau)$). Then
the following statements are equivalent:
\begin{itemize}
\item[1.] $\mathcal S$ is adjoint invariant (i.e., $T^*\in \mathcal S$ for each $T\in\mathcal S$)
\item[2.] $\tau$ is finite or $0$ is not a limit point of $A$
\end{itemize}
\end{theorem}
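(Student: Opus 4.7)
The plan is to prove the two implications separately, leaning on Theorem~\ref{taha10} and Example~\ref{salam30} for $(2)\Rightarrow(1)$ and constructing an explicit counterexample for the contrapositive of $(1)\Rightarrow(2)$.

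For $(2)\Rightarrow(1)$, since the adjoint operation is additive, it suffices to show $\sigma_{\varphi,w}^*\in\mathcal{S}$ for each generating $A\cup\{0\}$-weighted generalized shift $\sigma_{\varphi,w}$. Theorem~\ref{taha10} expresses the adjoint as a convergent series $\sum_{i\geq1}\sigma_{\eta_i,u_i}$ on $\ell^2_p(\tau)$, and conjugate invariance of $A$ puts every weight coordinate in $\{\overline{w_\alpha}:\alpha\in\tau\}\cup\{0\}\subseteq A\cup\{0\}$. Since $\mathcal{S}$ consists of finite sums, the remaining issue is truncation. If $\tau$ is finite then $m_\beta\leq|\tau|+1$ uniformly and Example~\ref{salam30}(1) applies. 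If $0$ is not a limit point of $A$, pick $\delta>0$ with $A\setminus\{0\}\subseteq\{z\in\mathbb{C}:|z|\geq\delta\}$ and apply Example~\ref{salam30}(2). In both cases the adjoint is a finite sum of $A\cup\{0\}$-weighted generalized shifts, hence an element of $\mathcal{S}$.

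For $(1)\Rightarrow(2)$ I would prove the contrapositive: assuming $\tau$ is infinite and $0$ is a limit point of $A$, I construct $T\in\mathcal{S}$ with $T^*\notin\mathcal{S}$. Pick $(\lambda_n)_{n\geq1}\subseteq A\setminus\{0\}$ with $|\lambda_n|\leq 1/n$ and distinct $\beta_0,\beta_1,\beta_2,\ldots\in\tau$; let $\varphi:\tau\to\tau$ be the constant map with value $\beta_0$, set $w_{\beta_n}:=\lambda_n$ for $n\geq1$, and $w_\alpha:=0$ otherwise. Since $\sum_n|\lambda_n|^2<\infty$, Remark~\ref{salam10} makes $T:=\sigma_{\varphi,w}\restriction_{\ell^2(\tau)}$ bounded, hence a generator of $\mathcal{S}$. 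A direct inner-product calculation gives $T^*(\mathsf{e}_{\beta_n})=\overline{\lambda_n}\mathsf{e}_{\beta_0}$ for every $n\geq1$. Assume for contradiction $T^*=\sum_{j=1}^k\sigma_{\phi_j,u_j}\in\mathcal{S}$; projecting the identity $T^*(\mathsf{e}_{\beta_n})=\sum_j\sigma_{\phi_j,u_j}(\mathsf{e}_{\beta_n})$ on the $\beta_0$-coordinate gives
\[\overline{\lambda_n}=\sum_{j=1}^k(u_j)_{\beta_0}\,\delta_{\phi_j(\beta_0),\beta_n},\qquad n\geq1.\]
Since $\overline{\lambda_n}\neq0$, for each $n\geq1$ some $j\leq k$ must satisfy $\phi_j(\beta_0)=\beta_n$; but $\{\phi_j(\beta_0):1\leq j\leq k\}$ has at most $k$ elements while the $\beta_n$ are infinitely many distinct points --- contradiction.

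The main obstacle is the forward direction: I must find a single weighted generalized shift that is bounded on $\ell^2(\tau)$ yet whose adjoint genuinely requires infinitely many weighted-generalized-shift summands. The construction above resolves this by concentrating $\varphi$ on one point, making $\varphi^{-1}(\beta_0)$ infinite and hence $m_{\beta_0}=\infty$ in the notation of Theorem~\ref{taha10}, while the limit-point hypothesis on $A$ supplies weights $\lambda_n$ small enough for $\sum|\lambda_n|^2$ to converge. The remaining ingredients --- additivity of adjoint, conjugate invariance of $A$, and the appeal to Example~\ref{salam30} --- are routine bookkeeping.
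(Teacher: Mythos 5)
Your proof is correct and follows essentially the same route as the paper: the forward direction uses Theorem~\ref{taha10} together with Example~\ref{salam30} to truncate the adjoint series to a finite sum, and the reverse direction builds the same counterexample (a constant map $\varphi$ with weights from $A$ tending to $0$, refuted by comparing the finitely many values $\phi_j(\beta_0)$ with infinitely many nonzero weight positions). The only notable difference is that you cite Example~\ref{salam30}(1) for the finite-$\tau$ case, which is the apt reference, whereas the paper points to part (3) there, apparently a slip.
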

\begin{proof}
``(2) $\Rightarrow$ (1)'':
Suppose $\tau$ is transfinite and zero is a limit point of $A$, then $\omega=\{0,1,2,\ldots\}\subseteq\tau$
and there exists a one--to--one sequence $\{t_n\}_{n\geq1}$ in $A$ such that
\[\forall n\geq1\:\:0<|t_{n+1}|<|t_n|<\frac1n\:.\]
Consider $\phi:\mathop{\tau\to\tau}\limits_{\alpha\mapsto1}$ and $v=(v_\alpha)_{\alpha\in\tau}\in (A\cup\{0\})^\tau$  with:
\[v_\alpha:=\left\{\begin{array}{lc} t_n & \alpha=n\in\{1,2,\ldots\}\subset\tau\:, \\ 0 & otherwise\:. \end{array}\right.\]
Then
\[\sup\{(\mathop{\Sigma}\limits_{\alpha\in\phi^{-1}(\beta)}|v_\alpha|^2)^{\frac12}:\beta\in\phi(\tau)\}
=(\mathop{\Sigma}\limits_{n\geq1}|t_n|^2)^{\frac12}\leq(\mathop{\Sigma}\limits_{n\geq1}\frac{1}{n^2})^{\frac12}<+\infty\]
and $\sigma_{\phi,v}(\ell^2(\tau))\subseteq\ell^2(\tau)$ by Remark \ref{salam10},
so $\sigma_{\phi,v}\restriction_{\ell^2(\tau)}\in\mathcal{S}$.
\\
We show $\sigma_{\phi,v}\restriction_{\ell^2(\tau)}^*\notin\mathcal{S}$,
otherwise there exist $v^1=(v^1_\alpha)_{\alpha\in\tau},\ldots,v^m=(v^m_\alpha)_{\alpha\in\tau}\in(A\cup\{0\})^\tau$ and
$\varphi_1,\ldots,\varphi_m:\tau\to\tau$ such that $\sigma_{\phi,v}\restriction_{\ell^2(\tau)}^*=
\mathop{\Sigma}\limits_{1\leq i\leq m}\sigma_{\varphi_i,v^i}\restriction_{\ell^2(\tau)}$
and $\sigma_{\varphi_j,v^j}(\ell^2(\tau))\subseteq \ell^2(\tau)$ for each $j\in\{1,\ldots,m\}$. Hence for all
$x=(x_\alpha)_{\alpha\in\tau}\in\ell^2(\tau)$ we have:
\begin{eqnarray*}
\mathop{\Sigma}\limits_{n\geq1}t_n\overline{x_n} & = & <\sigma_{\phi,v}(\mathsf{e}_1),x>
	= <\mathsf{e}_1,\sigma_{\phi,v}\restriction_{\ell^2(\tau)}^*(x)> \\
& = & \mathop{\Sigma}\limits_{1\leq i\leq m}<\mathsf{e}_1,\sigma_{\varphi_i,v^i}(x)>
	=\mathop{\Sigma}\limits_{1\leq i\leq m}\overline{v^i_1 x_{\varphi_i(1)}}
\end{eqnarray*}
Choose $p\in\{1,2,\ldots\}\setminus\{\varphi_1(1),\ldots,\varphi_m(1)\}$ and let $x=\mathsf{e}_p$, so:
\[t_p=\mathop{\Sigma}\limits_{n\geq1}t_n\overline{x_n} =\mathop{\Sigma}\limits_{1\leq i\leq m}\overline{v^i_1 x_{\varphi_i(1)}}=0\]
which is a contradiction and leads to the desired result.
\\
``(1) $\Rightarrow$ (2)'': If zero is not a limit point of $A$, then there exists $\delta>0$ such that $A\subseteq\{z\in\mathbb{C}:z=0\vee|z|\geq\delta\}$. For all $\varphi_1,\ldots,\varphi_m:\tau\to\tau$ and $w_1,\ldots,w_m\in(A\cup\{0\})^\tau$ with
$\sigma_{\varphi_i,w_i}(\ell^2(\tau))\subseteq\ell^2(\tau)$ ($i=1,\ldots,m$), by (2) in Example~\ref{salam30},
$\sigma_{\varphi_i,w_i}\restriction_{\ell^2(\tau)}^*\in\mathcal{S}$ ($i=1,\ldots,m$). Therefore
$(\mathop{\Sigma}\limits_{1\leq i\leq m}\sigma_{\varphi_i,w_i}\restriction_{\ell^2(\tau)})^*=
\mathop{\Sigma}\limits_{1\leq i\leq m}\sigma_{\varphi_i,w_i}\restriction_{\ell^2(\tau)}^*\in\mathcal{S}$.
Therefore $\mathcal{S}$ is adjoint invariant.
\\
Moreover, if $\tau=:N$ is finite, then for each $\beta\in\tau$, $\varphi^{-1}(\beta)$ has at most $N$ elements and we have the desired result
by (3) in Example~\ref{salam30}.
\end{proof}
\begin{corollary}
The collection of all generalized shift operators on $\ell^2(\tau)$ (i.e.
$\{\sigma_\phi\restriction_{\ell^2(\tau)}:\phi\in\tau^\tau,\sigma_\phi(\ell^2(\tau))\subseteq\ell^2(\tau)\}$)
is a subset of $\{0,1\}-$weighted generalized shift operators on $\ell^2(\tau)$. By Theorem~\ref{salam22}
the additive semigroup generated by $\{0,1\}-$weighted generalized shift operators on $\ell^2(\tau)$ is adjoint invariant.
In particular for generalized shift operator $\sigma_\phi\restriction_{\ell^2(\tau)}:\ell^2(\tau)\to \ell^2(\tau)$,
$\sigma_\phi\restriction_{\ell^2(\tau)}^*$ is finite summation of $\{0,1\}-$weighted generalized shift operators on $\ell^2(\tau)$
(see \cite{adjoint} too).
\end{corollary}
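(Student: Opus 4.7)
The plan is to dispatch the three claims of the corollary in sequence, each reducing to a result already in hand. First, the inclusion of (unweighted) generalized shifts among $\{0,1\}$-weighted generalized shifts is immediate: $\sigma_\phi = \sigma_{\phi,w}$ for the constant weight $w_\alpha \equiv 1 \in \{0,1\}$.

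Second, for adjoint-invariance of the additive semigroup, I would apply Theorem~\ref{salam22} with $A := \{0,1\}$. The hypotheses are satisfied since $\{0,1\} \subseteq \mathbb{R}$ is conjugate-invariant, and, being a finite (discrete) set, $0$ is not a limit point of $A$; so condition (2) of the theorem holds and (1) follows, giving adjoint-invariance.

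The substantive content is the final clause: the adjoint of an (unweighted) generalized shift on $\ell^2(\tau)$ is a \emph{finite} sum of $\{0,1\}$-weighted generalized shifts. Here I would combine the second equivalence in Remark~\ref{salam10} with Example~\ref{salam30}(1). The former produces some $N_0 \geq 1$ with $\mathrm{card}(\phi^{-1}(\beta)) \leq N_0$ for every $\beta \in \tau$; translating into the notation of the proof of Theorem~\ref{taha10} applied with weight $w_\alpha \equiv 1$ (so $C_\beta = \phi^{-1}(\beta)$ whenever nonempty) yields $m_\beta \leq N_0 + 1$ uniformly in $\beta$. Example~\ref{salam30}(1), taking $N := N_0 + 2$, then collapses the pointwise-convergent series from Theorem~\ref{taha10} to the finite sum $\sum_{1 \leq i < N} \sigma_{\eta_i,u_i}\restriction_{\ell^2(\tau)}$; each nonzero weight value equals $\overline{w_{\alpha_\beta^i}} = \overline{1} = 1 \in \{0,1\}$, so every summand is a $\{0,1\}$-weighted generalized shift.

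There is no genuine obstacle here: the corollary is a clean specialization of Theorem~\ref{salam22}, the fibre-boundedness criterion of Remark~\ref{salam10}, and the truncation principle in Example~\ref{salam30}(1). The only bookkeeping point is verifying that the specific weights $u_i$ produced by the construction in the proof of Theorem~\ref{taha10} actually land in $\{0,1\}$ rather than in a larger conjugate set, but since the input weights are all $1$ and $\overline{1}=1$, this is automatic.
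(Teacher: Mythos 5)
Your proposal is correct and follows essentially the same route the paper intends: the corollary carries no separate proof, being justified exactly by the constant-weight-$1$ identification $\sigma_\phi=\sigma_{\phi,(1)_{\alpha\in\tau}}$, by Theorem~\ref{salam22} applied to the conjugate-invariant set $A=\{0,1\}$ (for which $0$ is trivially not a limit point), and by the truncation of the series from Theorem~\ref{taha10} to a finite sum. Your only deviation is cosmetic: you reach the uniform bound on $m_\beta$ via the fibre-cardinality criterion of Remark~\ref{salam10} and then Example~\ref{salam30}(1), whereas the paper's own argument in Theorem~\ref{salam22} would invoke Example~\ref{salam30}(2) with $\delta=1$; the two are interchangeable here.
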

\section{Hermitian and unitary weighted generalized shift operators}
\noindent In Hilbert space $\mathcal H$ we call the operator $T:\mathcal{H}\to \mathcal{H}$ self--adjoint or Hermetian
if $T^*=T$. Also
we call the operator $T:\mathcal{H}\to \mathcal{H}$ unitary if it is bijective and $T^*=T^{-1}$.
\begin{theorem}[Self--adjoint weighted generalized shift operators]\label{self}
Weighted generalized shift operator $\sigma_{\varphi,w}\restriction_{\ell^2(\tau)}:\ell^2(\tau)\to\ell^2(\tau)$ is self--adjoint if and only if
for each $\theta\in\tau$:
\[w_\theta=\left\{\begin{array}{lc} 0 & \varphi^2(\theta)\neq\theta\:, \\  \overline{w_{\varphi(\theta)}} & \varphi^2(\theta)=\theta \:.
\end{array}\right.\tag{*}\]
\end{theorem}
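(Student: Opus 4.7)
The plan is to reduce the operator identity $\sigma_{\varphi,w}\restriction_{\ell^2(\tau)}=\sigma_{\varphi,w}\restriction_{\ell^2(\tau)}^*$ to a scalar condition on each pair $(\theta,\gamma)\in\tau\times\tau$, using the coordinate formula for the adjoint already obtained in the proof of Theorem~\ref{taha10}, and then to read off (*) by a short case analysis on whether $\varphi^2(\theta)=\theta$. The two coordinate formulas I would take as starting points are $\pi_\theta(\sigma_{\varphi,w}(y))=w_\theta y_{\varphi(\theta)}$, which is immediate from the definition, and
\[\pi_\theta(\sigma_{\varphi,w}\restriction_{\ell^2(\tau)}^*(y))=\sum_{\alpha\in\varphi^{-1}(\theta)}y_\alpha\overline{w_\alpha}\:,\]
which is exactly the calculation carried out inside the proof of Theorem~\ref{taha10}.

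Since both sides above are bounded linear functionals of $y$ and the vectors $\mathsf{e}_\gamma$ span a dense subspace of $\ell^2(\tau)$, self-adjointness of $\sigma_{\varphi,w}\restriction_{\ell^2(\tau)}$ is equivalent to the equality of these two expressions on every $y=\mathsf{e}_\gamma$. Testing against $\mathsf{e}_\gamma$ collapses the identity to
\[w_\theta\,\delta^\gamma_{\varphi(\theta)}=\left\{\begin{array}{lc}\overline{w_\gamma} & \varphi(\gamma)=\theta\:, \\ 0 & \varphi(\gamma)\neq\theta\:,\end{array}\right.\qquad(\dagger)\]
for every $(\theta,\gamma)\in\tau\times\tau$. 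For the ``only if'' direction I would fix $\theta$ and specialize $(\dagger)$ to $\gamma=\varphi(\theta)$: the left side becomes $w_\theta$, while the right side equals $\overline{w_{\varphi(\theta)}}$ precisely when $\varphi^2(\theta)=\theta$ and vanishes otherwise, matching (*) exactly.

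For the ``if'' direction I would assume (*) and verify $(\dagger)$ for every pair. The case $\gamma=\varphi(\theta)$ is immediate from (*). For $\gamma\neq\varphi(\theta)$ the left side of $(\dagger)$ vanishes, and the right side is nonzero only when $\varphi(\gamma)=\theta$; in that situation $\varphi^2(\gamma)=\varphi(\theta)\neq\gamma$, so (*) applied at index $\gamma$ (rather than $\theta$) forces $w_\gamma=0$. The main (modest) obstacle I anticipate is exactly this last step: one has to remember to invoke (*) at the ``other'' index to dispose of the off-diagonal pairs. Once that observation is in place the proof is a short and mechanical case split.
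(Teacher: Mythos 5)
Your proposal is correct and follows essentially the same route as the paper: the identity $(\dagger)$ you test on basis vectors is exactly the paper's matrix-entry identity $w_\theta\delta^\beta_{\varphi(\theta)}=\overline{w_\beta}\,\delta^\theta_{\varphi(\beta)}$, and your specialization $\gamma=\varphi(\theta)$ is the paper's choice $\beta=\varphi(\theta)$. The only cosmetic difference is in the converse: the paper verifies $\pi_\psi(\sigma_{\varphi,w}^*(x))=w_\psi x_{\varphi(\psi)}$ directly for arbitrary $x\in\ell^2(\tau)$, while you check only basis vectors and invoke boundedness plus density, which is equally valid.
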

\begin{proof}
Suppose $\sigma_{\varphi,w}\restriction_{\ell^2(\tau)}:\ell^2(\tau)\to\ell^2(\tau)$ is self--adjoint, then for each $\beta,\theta\in\tau$
we have:
\begin{eqnarray*}
w_\theta\delta^\beta_{\varphi(\theta)} & = & \pi_\theta(\sigma_{\varphi,w}(\mathsf{e}_\beta))
	=<\sigma_{\varphi,w}(\mathsf{e}_\beta),\mathsf{e}_\theta>
	= <\mathsf{e}_\beta,\sigma_{\varphi,w}\restriction_{\ell^2(\tau)}^*(\mathsf{e}_\theta)> \\
& 	= &<\mathsf{e}_\beta,\sigma_{\varphi,w}(\mathsf{e}_\theta)>
	=\overline{\pi_\beta(\sigma_{\varphi,w}(\mathsf{e}_\theta))}
	=\overline{w_\beta\delta^\theta_{\varphi(\beta)}}=\overline{w_\beta}\delta^\theta_{\varphi(\beta)}
\end{eqnarray*}
For $\beta=\varphi(\theta)$ we have:
\[w_\theta=w_\theta\delta^{\varphi(\theta)}_{\varphi(\theta)}=\overline{w_{\varphi(\theta)}}\delta^\theta_{\varphi(\varphi(\theta))}
=\left\{\begin{array}{lc} 0 & \varphi^2(\theta)\neq\theta\:, \\  \overline{w_{\varphi(\theta)}} & \varphi^2(\theta)=\theta \:.
\end{array}\right.\]
Now conversely suppose (*) holds. For each $x=(x_\alpha)_{\alpha\in\tau}\in\ell^2(\tau)$ and $\psi\in\tau$ we have
(note that if $\alpha\in\varphi^{-1}(\psi)$ and $\varphi^2(\alpha)=\alpha$, then $\alpha=\varphi(\psi)$):
\begin{eqnarray*}
\pi_\psi(\sigma_{\varphi,w}\restriction_{\ell^2(\tau)}^*(x)) & = & <\sigma_{\varphi,w}\restriction_{\ell^2(\tau)}^*(x),\mathsf{e}_\psi>
	=<x,\sigma_{\varphi,w}(\mathsf{e}_\psi)> \\
&&\\
& = & \mathop{\Sigma}\limits_{\alpha\in\tau}x_\alpha\overline{w_\alpha\delta_{\varphi(\alpha)}^\psi}
	=\mathop{\Sigma}\limits_{\alpha\in\tau}x_\alpha \overline{w_\alpha}\delta_{\varphi(\alpha)}^\psi
	= \mathop{\Sigma}\limits_{\alpha\in\varphi^{-1}(\psi)}x_\alpha\overline{ w_\alpha} \\
&&\\
& \mathop{=}\limits^{(*)}& \mathop{\Sigma}\limits_{\alpha\in\varphi^{-1}(\psi),\varphi^2(\alpha)=\alpha}x_\alpha w_{\varphi(\alpha)}
	 =	\mathop{\Sigma}\limits_{\alpha\in\varphi^{-1}(\psi),\varphi^2(\alpha)=\alpha}x_\alpha w_{\psi} \\
&& \\
& = & \mathop{\Sigma}\limits_{\alpha=\varphi(\psi)\in\varphi^{-1}(\psi),\varphi^2(\alpha)=\alpha}x_\alpha w_\psi \\
&& \\
& = & \mathop{\Sigma}\limits_{\alpha=\varphi(\psi)\in\varphi^{-1}(\psi),\varphi^2(\alpha)=\alpha}w_\psi x_{\varphi(\psi)} \\
&&\\
& \mathop{=}\limits^{(*)} & w_\psi x_{\varphi(\psi)}=\pi_\psi(\sigma_{\varphi,w}(x))
\end{eqnarray*}
By $\pi_\psi(\sigma_{\varphi,w}\restriction_{\ell^2(\tau)}^*(x)) =w_\psi x_{\varphi(\psi)}=\pi_\psi(\sigma_{\varphi,w}(x))$
for all $\psi\in\tau$, we have $\sigma_{\varphi,w}\restriction_{\ell^2(\tau)}^*(x)=\sigma_{\varphi,w}(x)$ which completes the proof.
\end{proof}
\begin{remark}[Unitary weighted generalized shift operators]\label{unit}
The following statements are equivalent \cite{isfahan}:
\\
$\bullet$ $\sigma_\varphi\restriction_{\ell^2(\tau)}:\ell^2(\tau)\to\ell^2(\tau)$ is bijective,
\\
$\bullet$ $\varphi:\tau\to\tau$ is bijective, $w_\alpha\neq0$ for all $\alpha\in\tau$,
	$\sup\{|w_\alpha|+\frac1{|w_\alpha|}:\alpha\in\tau\})<+\infty$.
\\
In particular, $\sigma_\varphi\restriction_{\ell^2(\tau)}:\ell^2(\tau)\to\ell^2(\tau)$ is an isometery (unitary (see \cite[Chapter I, Theorem 5.2 and Chapter II, Theorem 2.5]{conw} too) if and only if
$\varphi:\tau\to\tau$ is bijective and $|w_\alpha|=1$ for all $\alpha\in\tau$.
\end{remark}
\subsection*{A diagram} For transfinite $\tau$ in the class of
weighted generalized shift operators on Hilbert space
$\ell^2(\tau)$, i.e. $\mathcal{C}:=\{\sigma_{\rho,u}\restriction_{\ell^2(\tau)}:\rho\in\tau^\tau, u\in\mathbb{C}^\tau,
\sigma_{\rho,u}(\ell^2(\tau))\subseteq\ell^2(\tau)\}$, we have the following diagram:
\begin{center}
\includegraphics{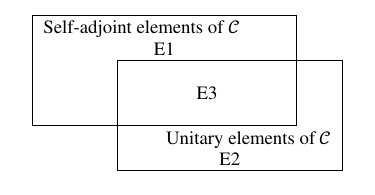}
\end{center}
Where Ei is $\sigma_{\rho_i,u_i}\restriction_{\ell^2(\tau)}$ in  the following way:
\begin{itemize}
\item $\rho_1=id_\tau:\mathop{\tau\to\tau}\limits_{\alpha\mapsto\alpha}$ and $u_1=(2)_{\alpha\in\tau}$,
\item $\rho_2=id_\tau:\mathop{\tau\to\tau}\limits_{\alpha\mapsto\alpha}$ and $u_2=(i)_{\alpha\in\tau}$,
\item $\rho_3=id_\tau:\mathop{\tau\to\tau}\limits_{\alpha\mapsto\alpha}$ and $u_1=(1)_{\alpha\in\tau}$.
\end{itemize}

\noindent \noindent {\small {\bf Fatemah Ayatollah Zadeh Shirazi}, Faculty
of Mathematics, Statistics and Computer Science, College of
Science, University of Tehran, Enghelab Ave., Tehran, Iran
\linebreak (f.a.z.shirazi@ut.ac.ir)}
\\
{\small {\bf Elaheh Hakimi}, Faculty of Mathematics, Statistics
and Computer Science, College of Science, University of Tehran,
Enghelab Ave., Tehran, Iran (elaheh.hakimi@gmail.com)}
\\
{\small {\bf Arezoo Hosseini},
Department of Mathematics, Education Farhangian University,
\linebreak
P.~O.~Box 14665--889, Tehran, Iran
(a.hosseini@cfu.ac.ir)}
\\
{\small {\bf Reza Rezavand}, School of Mathematics, Statistics
and Computer Science, College of Science, University of Tehran,
Enghelab Ave., Tehran, Iran (rezavand@ut.ac.ir)}

\end{document}